\documentclass[11pt]{amsart}

\usepackage[usenames]{color}
\usepackage{hyperref}
\usepackage{amsmath,amsfonts,amssymb}
\numberwithin{equation}{section}

\setlength{\textwidth}{6.5in}
\setlength{\oddsidemargin}{0.0in}
\setlength{\evensidemargin}{0.0in}


\title{Characterising derivations from the disc algebra to its dual}
\author{Y. Choi}
\address{D\'epartement de math\'ematiques
et de statistique\\
Pavillon Alexandre-Vachon\\
Universit\'e Laval\\
Qu\'ebec, QC \\
Canada, G1V 0A6%
}
\email{y.choi.97@cantab.net}
\author{M. J. Heath}
\address{
Departamento de Matem\'atica \\
Instituto Superior T\'ecnico  \\
Av. Rovisco Pais \\
1049-001 Lisboa \\
Portugal%
}
\email{mheath@math.ist.utl.pt}
\thanks{Second author supported by post-doctoral grant SFRH/BPD/40762/2007 from FCT (Portugal).}
\keywords{
Derivation, disc algebra, Hardy space
}

 
\subjclass[2010]{Primary 46J15; Secondary 30H10, 47B47}


\usepackage{amsthm}
\newcounter{pulse}
\numberwithin{pulse}{section}
\theoremstyle{plain}
\newtheorem{prop}[pulse]{Proposition}
\newtheorem{lem}[pulse]{Lemma}
\newtheorem{cor}[pulse]{Corollary}
\newtheorem{thm}[pulse]{Theorem}
\theoremstyle{definition}

\theoremstyle{remark}
\newtheorem{rem}[pulse]{Remark}



\newenvironment{newnum}{%
\begin{enumerate}

}{\end{enumerate}\ignorespacesafterend}

\usepackage{verbatim}



\newcommand{\dt}[1]{\emph{#1}\/}  

\renewcommand{\dt}[1]{\textcolor{Bittersweet}{\sf#1}}


\newcommand{\Abs}[1]{\left\vert#1\right\vert}
\newcommand{\abs}[1]{\vert#1\vert}
\newcommand{\norm}[1]{\Vert#1\Vert}
\newcommand{\snorm}[1]{\Vert#1\Vert_\infty}


\renewcommand{\Re}{\mathop{\sf Re}\nolimits}
\renewcommand{\Im}{\mathop{\sf Im}\nolimits}

\newcommand{\Cplx}{{\mathbb C}}

\newcommand{\D}{\mathbb D}
\newcommand{\Disc}{\D}
\newcommand{\T}{\mathbb T}

\newcommand{\AD}{A({\mathbb D})}  

\newcommand{\sB}{{\sf B}}
\newcommand{\sE}{{\sf E}}
\newcommand{\Der}{\mathcal D}
\newcommand{\defeq}{:=}
\newcommand{\id}[1][]{\mathbf{1}_{#1}}

\newcommand{\logweight}[1]{{\log\frac{1}{\abs{#1}}\operatorname{dA}({#1})}}
\newcommand{\Conj}{\operatorname{Conj}}
\newcommand{\wLOG}{without loss of generality}
\newcommand{\st}{\,:\,} 

\renewcommand{\d}{\operatorname{d}}
\renewcommand{\d}[1]{\operatorname{d\ensuremath{#1}}}
\newcommand{\darc}[1]{\operatorname{\vert d#1\vert}} 

\newcommand{\pair}[2]{{\left\langle#1,#2\right\rangle}}

\begin{document}

\begin{abstract}
We show that the space of all bounded derivations from the disc algebra into its dual can be identified with the Hardy space~$H^1$; using this, we infer that all such derivations are compact. Also, given a fixed derivation~$D$, we construct a finite, positive Borel measure $\mu_D$ on the closed disc, such that $D$ factors through $L^2(\mu_D)$. Such a measure is known to exist, for any bounded linear map from the disc algebra to its dual, by results of Bourgain and Pietsch, but these results are highly non-constructive.
\end{abstract}

\maketitle
\begin{section}{Introduction}
Although derivations from Banach algebras to particular coefficient modules have been much studied, interest has usually focused on the \emph{existence} or otherwise of non-trivial bounded derivations, rather than their characterisation. Even in the special case of derivations from a commutative Banach algebra to its dual (as in~\cite{BCD_Lip} or \cite{Run_Kahl} for example), there are comparatively few examples where the space of such derivations is explicitly characterised or parametrised.

For uniform algebras, very little is known: the only examples with a complete characterisation are the trivial uniform algebras $C(X)$ -- and these have no nonzero bounded derivations into \emph{any} dual $C(X)$-bimodule.
In this paper we provide the first example of 
a non-trivial uniform algebra -- namely, the disc algebra -- where
the space of derivations to its dual can be completely characterised.
For the disc algebra, this is equivalent to characterising those complex Borel measures $\mu$ on the unit circle, for which the bilinear functional $(f,g)\mapsto\int_\T f'g\d{\mu}$ on the space of analytic polynomials is bounded in the supremum norm on the unit disc.

Indeed, our proof identifies this space, in a natural way, with a rather well-known space of functions on the unit circle. 
Some of our techniques should be applicable to other uniform algebras, but for sake of brevity we do not pursue this here.

As a by-product of our characterisation, we obtain a short proof that every bounded derivation from the disc algebra to its dual is automatically compact, resolving a question raised in~\cite{SEM_PhD}. 
Finally, using some ideas from the proof of our characterisation, we show how one can construct a `Pietsch control measure' for a given derivation, which witnesses the (known) fact that each such derivation is $2$-summing. Our construction is explicit and does not rely on the previous results of Bourgain on arbitrary linear maps from the disc algebra to its dual.

\subsection*{Notation and other preliminaries}
Throughout, `derivation' means `bounded derivation'.
Given a Banach algebra $A$, we denote by $\Der(A)$ the space of bounded derivations from $A$ to is dual space~$A^*$\/.

We briefly review the (mostly standard) notation used in describing the disc algebra and related spaces.
\begin{itemize}
\item $\Delta$ denotes the closed unit disc in the complex plane $\Cplx$\/; $\Disc$ denotes its interior, and $\T$ its boundary. We write $\AD$ for the \dt{disc algebra}, that is, the space of all functions which are continuous on $\Delta$ and analytic on $\Disc$\/. Via the maximum modulus principle this can be regarded as a closed subalgebra of
 $C(\T)$, and we shall do this without further comment.
\item $\darc{z}$ denotes arc length measure on~$\T$\/, normalised so that the length of $\T$ is $2\pi$. For $1\leq p < \infty$\/, the norm on $L^p(\T)$ is denoted by $\norm{\cdot}_p$\/, and normalised so that $\norm{\id}_p = (2\pi)^{1/p}$\/. 
\item For $1\leq p \leq \infty$\/, we denote by $H^p$ the familiar Hardy space on $\Disc$\/. We will identify (via radial limits) elements of $H^p$ with those functions in $L^p(\T)$ whose negative Fourier coefficients are zero. It is also convenient to write $H^p_0$ for the subspace
\[ H^p_0 \defeq \left\{ f\in H^p\st f(0)=\int_\T f(z) \darc{z} = 0 \right\}\,. \]  
\end{itemize}
Finally, if $X$ is a set of complex-valued functions, we denote by $\Conj X$ the set $\left\{\overline{f}\st f\in X\right\}$\/.

\end{section}

\begin{section}{The characterisation}
Our characterisation is as follows:
\begin{thm}\label{t:characterise}
 Let $D\in\Der (\AD)$. Then there is a unique $h_D\in \Conj H^1_0$ such that 
\[ D(f)(\id)=\int_\T f(z)h_D(z) \darc{z} \qquad\text{ for all $f\in\AD$.} \]
Furthermore, $\sE:D\mapsto h_D$ defines a bounded, linear isomorphism from $\Der(\AD)$ onto $\Conj H^1_0$.
\end{thm}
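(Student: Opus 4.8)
The plan is to fix $D\in\Der(\AD)$ and extract everything from the single sequence $\ell_s\defeq D(z^s)(\id)$, $s\ge 1$. First I would record the standard fact $D(\id)=0$, and then prove by induction on $n$, applying the derivation identity $D(fg)(h)=D(f)(gh)+D(g)(fh)$ to monomials, that
\[ D(z^n)(z^m)=\frac{n}{n+m}\,\ell_{n+m}\qquad(n\ge 1,\ m\ge 0). \]
This one computation already shows that $D$ is \emph{determined} by $(\ell_s)$, so $\sE$ is injective, and it pins down the only possible $h_D$, giving uniqueness directly from matching Fourier coefficients. Writing $\Phi(z)\defeq\sum_{s\ge 1}\overline{\ell_s}\,z^s$ — an analytic function on $\D$, since $\abs{\ell_s}\le\norm{D}$ — the target assertion $h_D\in\Conj H^1_0$ becomes exactly $\Phi\in H^1$, with $h_D=\tfrac1{2\pi}\overline{\Phi}$ and $D(f)(\id)=\int_\T f\,h_D\,\darc{z}$.

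The second step converts the combinatorial formula into an integral representation suited to the sup norm. Using the elementary moments $\int_\D z^j\overline{z^k}\logweight{z}=0$ for $j\ne k$ and $\int_\D\abs{z}^{2k}\logweight{z}=c_1(k+1)^{-2}$, I would verify, first for analytic polynomials $f,g$ and then by density, the identity
\[ D(f)(g)=c_0\int_\D f'(z)\,g(z)\,\overline{\Phi'(z)}\,\logweight{z} \]
for suitable absolute constants. The logarithmic weight is exactly what tames the derivative $f'$: for $f\in H^\infty$ one has $\int_\D\abs{f'}^2\logweight{z}\lesssim\snorm{f}^2$, and more strongly $\abs{f'(z)}^2\logweight{z}$ is a Carleson measure with constant $\lesssim\snorm{f}^2$.

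From here the \emph{construction} direction, $\norm{D}\lesssim\norm{\Phi}_{H^1}$, is concrete and yields surjectivity of $\sE$ together with boundedness of $\sE^{-1}$. Given $h\in\Conj H^1_0$, I set $\Phi\defeq 2\pi\overline{h}\in H^1_0$, define $D$ by the displayed formula, and check via the monomial identity that it is a derivation. To estimate it, I factor $\Phi=PQ$ with $P,Q\in H^2$ and $\norm{P}_2=\norm{Q}_2=\norm{\Phi}_{H^1}^{1/2}$ (the $H^2\cdot H^2$ factorisation of $H^1$), expand $\Phi'=P'Q+PQ'$, and apply the Cauchy--Schwarz inequality in the measure $\logweight{z}$ to each term, using the Littlewood--Paley bound on $\int_\D\abs{P'}^2\logweight{z}$ and the Carleson bound on $\int_\D\abs{f'}^2\abs{Q}^2\logweight{z}$. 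This gives $\abs{D(f)(g)}\lesssim\norm{\Phi}_{H^1}\snorm{f}\snorm{g}$.

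The main obstacle is the reverse inequality $\norm{\Phi}_{H^1}\lesssim\norm{D}$, which makes $\sE$ well defined and bounded. It cannot come from boundedness of $f\mapsto D(f)(\id)$ alone: that functional is always represented by a measure, but its analytic projection $\Phi$ need not lie in $L^1$, since the Riesz projection is unbounded on $L^1(\T)$; so the non-symmetric part of the form $D(\cdot)(\cdot)$ must be used in full. The intended route is the duality $\bigl(C(\T)/\AD\bigr)^*\cong H^1_0$ furnished by the F.\ and M.\ Riesz theorem, which gives $\norm{\Phi}_{H^1}=\sup\{\,\abs{D(G)(\id)}\st G\in\AD,\ G(0)=0,\ \operatorname{dist}_{\infty}(\overline{G},\AD)\le 1\,\}$, thereby reducing matters to the Nehari-type estimate $\abs{D(G)(\id)}\lesssim\norm{D}\,\operatorname{dist}_{\infty}(\overline{G},\AD)$; I would attack this by writing $\overline{G}=a+\rho$ with $a\in\AD$ a near-best approximant, and feeding $G$ and $a$ into the integral representation of Step~2 so that the sup-norm bound on $D$ applies — and I expect this estimate to be the genuine heart of the argument. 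Granting it, $\sE$ is a bounded linear injection onto $\Conj H^1_0$ with bounded inverse, hence the asserted isomorphism; its linearity is immediate and uniqueness was already settled in Step~1.
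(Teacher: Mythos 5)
Your Steps 1--3 are essentially sound. The combinatorial identity $D(z^n)(z^m)=\tfrac{n}{n+m}\,\ell_{n+m}$ is correct and settles uniqueness and injectivity; the logarithmic-weight representation is the Littlewood--Paley identity the paper records as Lemma~\ref{l:LPtrick} (one caveat: with only $\abs{\ell_s}\le\norm{D}$ known, the integral need not converge absolutely, so the identity must be read as $\lim_{\rho\to1}\int_{\abs{z}\le\rho}$); and your construction direction, via the Riesz factorisation $\Phi=PQ$, the Littlewood--Paley bound on $\int_\D\abs{P'}^2\logweight{z}$ and the Carleson embedding for $\abs{f'}^2\logweight{z}$, is a correct proof of surjectivity with bounded inverse. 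Interestingly, this is not the paper's official surjectivity argument (which quotes the equivalence of the BMOA seminorms $\norm{\cdot}_{(2)}$ and $\norm{\cdot}_{(3)}$ as a black box); it is instead essentially the direct argument of the paper's Section~4 and Remark~\ref{rem:could-have}.

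The genuine gap is exactly where you say you ``expect the genuine heart'' to lie: the Nehari-type estimate $\abs{D(G)(\id)}\lesssim\norm{D}\operatorname{dist}_\infty(\overline{G},\AD)$ is never proved, and the sketched attack does not yield it. Feeding $G$ and a near-best approximant $a$ into the integral representation only lets you apply boundedness of $D$ to pairs of \emph{analytic} functions, which produces bounds involving $\snorm{G}$ and $\snorm{a}$ separately, never $\snorm{\overline{G}-a}$. If instead you try to exploit $\rho=\overline{G}-a$ via $\overline{D(G)(\id)}=c_0\int_\D(\bar{\partial}\rho)\,\Phi'\,\logweight{z}$ and integrate by parts (the boundary term vanishes since the weight does), the error term is $c_0\int_\D \rho\,\Phi'(z)\,(2\bar{z})^{-1}\operatorname{dA}(z)$, and estimating it by $\snorm{\rho}\int_\D\abs{\Phi'}\,\abs{z}^{-1}\operatorname{dA}(z)$ fails: $\int_\D\abs{\Phi'}\operatorname{dA}$ can be infinite even for $\Phi\in H^\infty$ (certain Blaschke products), let alone for a $\Phi$ about which you know only that its Taylor coefficients are bounded -- e.g.\ $\Phi(z)=z/(1-z)$. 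Any argument that uses $\rho$ only through $\snorm{\rho}$ and absolute values is implicitly re-proving the $H^1$--BMO duality you are trying to invoke, so it is circular at this point. The missing idea -- the one genuinely clever ingredient in the paper's proof -- is Morris's exponential trick (Proposition~\ref{p:fromunifalg}): from $e^{-a}D(e^a)=D(a)$ one gets $\norm{D(a)}\le\norm{D}e^{2\snorm{\Re a}}$, hence by rescaling $\norm{D(h)}\le 2e\norm{D}\snorm{\Re h}$, and then, since $\Re(f-\overline{g})=\Re(f-g)$ and $\Im(f-\overline{g})=\Re(-i(f+g))$, the bound $\norm{D(f)}\le 2e\norm{D}\snorm{f-\overline{g}}$ for all $f,g\in\AD$. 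It is precisely this estimate that lets $\sB(D)$ descend to a bounded functional on $C(\T)/\Conj\AD$, after which the F.~and~M.~Riesz theorem delivers $h_D\in\Conj H^1_0$ with $\norm{h_D}_1\le 2e\norm{D}$; without it, or a substitute for it, your hard direction remains unproven.
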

As we shall see, the tools required to prove this result already exist in the literature. It is their combination which appears to be new.

\subsection{Everything except surjectivity of $\sE$}
In this subsection we prove most of Theorem \ref{t:characterise}; more precisely we prove the following.

\begin{prop}\label{t:butsurj}
 Let $D\in\Der (\AD)$. Then there is a unique $h_D\in \Conj H^1_0$ such that 
\[D_h(f)(\id)=\int_\T f(z)h(z) \darc{z} \qquad\text{ for all $f\in\AD$.}\]
Furthermore $\sE:D\mapsto h_D$ defines an injective, bounded, linear map from $\Der(\AD)$ to $\Conj H^1_0$.
\end{prop}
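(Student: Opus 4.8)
The plan is to read off $h_D$ from the ``lowest-order'' part of $D$, namely the functional $\lambda\defeq D(\cdot)(\id)\in\AD^*$, and then to show that boundedness of $D$ forces $\lambda$ to be represented by a density in $\Conj H^1_0$.

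First I would record the elementary structural facts. Since $D$ is a derivation, $D(\id)=0$, so $\lambda(\id)=0$; and from the Leibniz rule together with commutativity one gets $D(z^n)=n\,z^{n-1}\cdot D(z)$, where $(a\cdot\phi)(b)=\phi(ba)$ is the module action on $\AD^*$. Evaluating at $\id$ gives $\lambda(z^n)=n\,D(z)(z^{n-1})$ for $n\ge 1$. This already pins down the only possible candidate: if $h=\sum_{n\ge1}c_{-n}z^{-n}\in\Conj H^1_0$ represents $\lambda$, then $\int_\T z^n h\,\darc{z}=2\pi c_{-n}$ forces $c_{-n}=\tfrac1{2\pi}\lambda(z^n)$. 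Uniqueness and linearity of $\sE$ are immediate from this, and injectivity follows from the same relations: if $h_D=0$ then $\lambda\equiv0$, hence $D(z)(z^m)=0$ for all $m\ge0$, so $D(z)=0$ by density of the analytic polynomials, and then $D=0$ by Leibniz.

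The substance of the Proposition is therefore existence: showing that the formal series $\sum_{n\ge1}\tfrac1{2\pi}\lambda(z^n)z^{-n}$ is in fact the boundary function of some $h_D\in\Conj H^1_0$, with $\norm{h_D}_1\lesssim\norm{D}$. Here I would pass to the full bilinear form $V(f,g)\defeq D(f)(g)$, which is bounded on $\AD\times\AD$ and satisfies the cocycle identity $V(fg,k)=V(f,gk)+V(g,fk)$. Writing $k\defeq\overline{h_D}$ and using the orthogonality relations for the weight $\logweight{z}$ (equivalently the Littlewood--Paley identity), a direct computation on analytic polynomials identifies $V$ with an area form,
\[
V(f,g)=c\int_{\D}f'(z)\,g(z)\,\overline{k'(z)}\,\logweight{z}
\]
for an absolute constant $c$; specialising to $g=\id$, the same identity gives $\lambda(f)=V(f,\id)=\int_\T f\,h_D\,\darc{z}$, which is the formula of the Proposition.

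The main obstacle is then precisely the hard, analytic direction: to deduce, from boundedness of $V$ (equivalently of the form $(f,g)\mapsto\int_\T f'g\,\d{\mu}$ on analytic polynomials), that the symbol $k$ lies in $H^1$, with norm controlled by $\norm{D}$. Mere boundedness of $\lambda$ on $\AD$ is not enough for this -- it only places $k$ in a space marginally larger than $H^1$ -- so one must genuinely exploit the derivative in the form, and hence the Hankel-type structure of $V$. I would obtain the $H^1$ conclusion by invoking the relevant characterisation of bounded forms of this type from the literature; this is the step I expect to require the most care, and it is the point at which the Hardy space $H^1$ enters.
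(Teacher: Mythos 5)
Your treatment of uniqueness, linearity and injectivity is correct and essentially the same as the paper's Lemma~\ref{l:sB}: the relation $D(z^n)(z^m)=\tfrac{n}{n+m}\lambda(z^{n+m})$ (the paper phrases it as $D(f)(g)=\sB(D)(u)$ where $u(0)=0$, $u'=f'g$) shows that $\lambda\defeq D(\cdot)(\id)$ determines $D$ and forces the Fourier coefficients of any representing density. But the existence step --- that boundedness of $D$ actually produces $h_D\in\Conj H^1_0$ with $\norm{h_D}_1\lesssim\norm{D}$ --- is the entire substance of the Proposition, and your proposal does not prove it: it ends by ``invoking the relevant characterisation of bounded forms of this type from the literature.'' The characterisation of which measures $\mu$ make $(f,g)\mapsto\int_\T f'g\,\d{\mu}$ bounded in the sup norm is, as the paper's introduction points out, \emph{equivalent} to the theorem being proved, so as written the argument is circular, or at best defers all of the content to a citation you have not identified. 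There is also a local circularity in your middle step: you set $k\defeq\overline{h_D}$ and rewrite $V$ as an area form against $\overline{k'}$ before $h_D$ is known to exist; and even granted existence, that Littlewood--Paley identity is not justified for a symbol merely in $H^1$ (the paper, in its Section~4, must first decompose $h=\alpha z+k_1^2+k_2^2$ with $k_1,k_2\in H^2_0$ precisely in order to make such computations legitimate).

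The paper closes the gap by a different and more elementary mechanism, and this is where the derivation property is genuinely exploited --- not through the Hankel structure of $V$. Since $\AD^*$ is a symmetric module, the exponential trick $e^{-a}D(e^a)=D(a)$ yields Proposition~\ref{p:fromunifalg} (due to Morris): $D$ annihilates $\AD\cap\Conj{\AD}$ and, crucially, $\norm{D(f)}\leq 2e\norm{D}\snorm{f-\overline{g}}$ for \emph{every} $g\in\AD$. Hence $\sB(D)$ is bounded for the quotient norm on $C(\T)/\Conj{\AD}$, and the F.~\&~M. Riesz theorem identifies $(C(\T)/\Conj{\AD})^*$ isometrically with $\Conj H^1_0$, giving $h_D$ with $\norm{h_D}_1\leq 2e\norm{D}$ in one stroke. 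Fefferman/BMOA-type machinery enters the paper only for the \emph{converse} direction (surjectivity of $\sE$ in Theorem~\ref{t:characterise}, i.e.\ that every $h\in H^1_0$ yields a bounded derivation), which is in fact the direction a duality-of-forms theorem most naturally delivers. To repair your proof you would need either to reproduce Morris's exponential argument, or to locate and correctly apply a Nehari-type theorem for these weighted Hankel forms on the disc algebra; the latter route is genuinely harder than the one the paper takes.
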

To prove this we shall require some lemmas, and some auxiliary notation.
Define $\sB: \Der(\AD)\to \AD^*$ by
\[ \sB(D)(f) \defeq D(f)(\id)\qquad(f\in\AD). \]

\begin{lem}\label{l:sB}
$\sB$ is a continuous, injective linear map.
\end{lem}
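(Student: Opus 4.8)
The plan is to verify linearity and continuity directly from the definition, and then to obtain injectivity by exploiting the fact that a derivation on $\AD$ is determined by its value on the coordinate function. Linearity of $\sB$ is immediate, since $D\mapsto D(\cdot)(\id)$ is visibly linear in $D$. For continuity I would estimate, for $D\in\Der(\AD)$ and $f\in\AD$,
\[ \abs{\sB(D)(f)} = \abs{D(f)(\id)} \le \norm{D(f)}_{\AD^*}\snorm{\id} \le \norm{D}\,\snorm{f}, \]
using $\snorm{\id}=1$; this yields $\norm{\sB(D)}\le\norm{D}$, so $\sB$ is bounded.

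For injectivity, the key observations are: (i) $\AD^*$ is a \emph{symmetric} $\AD$-bimodule, since $\AD$ is commutative, so that the module actions satisfy $(a\cdot\phi)(b)=\phi(ab)$; and (ii) the analytic polynomials are dense in $\AD$. Let $\zeta\in\AD$ denote the coordinate function $z\mapsto z$. Because $\AD$ is generated as a unital closed algebra by $\zeta$, and $D$ is bounded, $D$ is completely determined by the single element $D(\zeta)\in\AD^*$; indeed the Leibniz rule together with the symmetry of the bimodule gives the power rule $D(\zeta^n)=n\,\zeta^{n-1}\cdot D(\zeta)$ for every $n\ge1$. It therefore suffices to show that $\sB(D)=0$ forces $D(\zeta)=0$.

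Suppose then that $\sB(D)=0$. Evaluating at the monomials and applying the power rule,
\[ 0 = \sB(D)(\zeta^n) = D(\zeta^n)(\id) = n\,(\zeta^{n-1}\cdot D(\zeta))(\id) = n\,D(\zeta)(\zeta^{n-1}) \qquad(n\ge1). \]
Hence $D(\zeta)(\zeta^m)=0$ for every $m\ge0$, so the bounded functional $D(\zeta)$ annihilates every analytic polynomial, and by density it is zero; the power rule then gives $D(\zeta^n)=0$ for all $n$, so $D$ vanishes on all polynomials and, by continuity, $D=0$. The only points needing care are in the reduction of the previous paragraph, namely justifying the power rule via symmetry of the bimodule and invoking density of polynomials to pass from monomials to all of $\AD$; neither is deep, so I anticipate no genuine obstacle in this lemma. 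The real substance of Theorem~\ref{t:characterise} should instead lie in the later identification of the range of $\sE$ with $\Conj H^1_0$.
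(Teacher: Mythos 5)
Your proof is correct and follows essentially the same route as the paper: both arguments rest on the power rule $D(\zeta^n)=n\,\zeta^{n-1}\cdot D(\zeta)$ (Leibniz plus symmetry of the dual module) together with density of the polynomials, the only cosmetic difference being that you first deduce $D(\zeta)=0$ from testing $\sB(D)$ on monomials, whereas the paper writes $D(f)(g)=D(u)(\id)=\sB(D)(u)$ directly for an antiderivative $u$ of $f'g$.
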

\begin{proof}
It is clear that $\sB$ is linear and contractive. Suppose $D\in\Der(\AD)$ and that $\sB(D)=0$\/. Given analytic polynomials $f$ and $g$\/, let $u$ be the polynomial with constant term zero and whose derivative equals $f'g$\/. Then
\[ D(f)(g) = D(Z)(f'g) = D(u)(\id) = \sB(D)(u) = 0 \;;\]
and so by continuity $D=0$\/. Thus $\sB$ is injective.
\end{proof}

It follows from work of Morris in \cite{SEM_PhD} that $\sB$ factors through the natural inclusion of $\Conj H^1_0$ into $\AD^*$\/.
The main ingredient is the following proposition.

\begin{prop}\label{p:fromunifalg}
Let $A$ be a unital uniform algebra on a compact space $X$. Let $M$ be a symmetric, contractive Banach $A$-bimodule, and let $D:A\to M$ be a bounded derivation.
\begin{newnum}
\item If $f\in A\cap \Conj{A}$ then $D(f)=0$\/.
\item If $h\in A$ then $\norm{D(h)}\leq 2e\norm{D}\snorm{\Re h}$\/.
\item If $f,g\in A$ then $\norm{D(f)}\leq 2e \norm{D} \snorm{f-\overline{g}}$\/.
\end{newnum} 
\end{prop}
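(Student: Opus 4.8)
The engine of the whole proposition is the exponential--scaling trick, so I would begin with the two algebraic identities that make it run. Since $M$ is symmetric, the Leibniz rule reads $D(ab)=a\cdot D(b)+b\cdot D(a)$, whence $D(a^n)=na^{n-1}\cdot D(a)$ by induction, and therefore, summing the exponential series (which converges in $A$ and on which the bounded map $D$ is continuous),
\[ D(\exp a)=\exp(a)\cdot D(a)\qquad(a\in A). \]
Since the action is unital and contractive, and $\exp(a)\exp(-a)=\id$, I can rewrite this as $D(a)=\exp(-a)\cdot D(\exp a)$, which is the form I will exploit.

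With this in hand, (ii) is the heart of the matter, and everything else follows from it. For $h\in A$ and a real parameter $t>0$, the identity gives $t\,D(h)=\exp(-th)\cdot D(\exp(th))$. Taking norms and using contractivity of the action together with boundedness of $D$,
\[ t\norm{D(h)}\le \snorm{\exp(-th)}\,\norm{D}\,\snorm{\exp(th)}. \]
Now $\snorm{\exp(\pm th)}=\exp\bigl(\sup_X(\pm t\Re h)\bigr)\le\exp(t\snorm{\Re h})$, so the right-hand side is at most $\norm{D}\exp(2t\snorm{\Re h})$. Dividing by $t$ and minimising $t^{-1}\exp(2t\snorm{\Re h})$ over $t>0$ --- the minimum is attained at $t=1/(2\snorm{\Re h})$ and equals $2e\snorm{\Re h}$ --- yields exactly $\norm{D(h)}\le 2e\norm{D}\snorm{\Re h}$. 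When $\snorm{\Re h}=0$ the displayed bound reads $t\norm{D(h)}\le\norm{D}$ for every $t$, so letting $t\to\infty$ forces $D(h)=0$, consistent with the claimed inequality. The constant $2e$ is thus forced by the $t=1/(2\snorm{\Re h})$ balance, and is the only quantitative subtlety in this part.

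Part (i) then drops out of (ii): if $f\in A\cap\Conj A$ then $\overline f\in A$, so $\Re f$ and $\Im f$ lie in $A$ and are real-valued; applying (ii) to $i\Re f$ and to $i\Im f$, each of which has vanishing real part, gives $D(\Re f)=D(\Im f)=0$, hence $D(f)=0$. For (iii) I would apply (ii) to $h=f-g\in A$ and exploit the identity $\Re(f-g)=\Re f-\Re g=\Re(f-\overline g)$, together with $\snorm{\Re(f-\overline g)}\le\snorm{f-\overline g}$; this delivers
\[ \norm{D(f-g)}\le 2e\norm{D}\snorm{f-\overline g}. \]

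I expect the main obstacle to be precisely the passage from this last estimate to the stated one: applied to a single element, the scaling argument most naturally controls $\norm{D(f-g)}$ rather than $\norm{D(f)}$. To reach $\norm{D(f)}$ itself I would use the freedom in $g$ together with part (i), choosing the representative of $f$ modulo $A\cap\Conj A$ so that the relevant quantity becomes the sup-norm distance from $f$ to $\Conj A$; the delicate point --- and the step I would check most carefully --- is that carrying out this reduction, which at bottom encodes the (unbounded) passage between a function and its harmonic conjugate, costs no more than the factor $2e$ already present in (ii).
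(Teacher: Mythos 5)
Your treatment of the exponential identity and of part (ii) is essentially the paper's own argument: the bound $\norm{D(a)}\le\norm{D}\snorm{e^{a}}\snorm{e^{-a}}\le\norm{D}\exp(2\snorm{\Re a})$ followed by the optimal scaling $t=(2\snorm{\Re h})^{-1}$, which produces the constant $2e$; and your part (i), deducing $D(f)=0$ for $f\in A\cap\Conj{A}$ by applying the degenerate case of (ii) to $i\Re f$ and $i\Im f$, merely reverses the paper's order of (i) and (ii), which is harmless since your proof of (ii) handles $\snorm{\Re h}=0$ directly by letting $t\to\infty$.

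The gap is in (iii), and although you have correctly located it, the repair you sketch cannot work. Part (iii) is quantified over \emph{all} pairs $f,g\in A$, so there is no freedom to ``choose'' $g$; equivalently, (iii) asserts $\norm{D(f)}\le 2e\norm{D}\,\mathrm{dist}_\infty(f,\Conj{A})$, and adjusting $f$ by an element $k\in A\cap\Conj{A}$ is vacuous for this purpose: by (i) it leaves $D(f)$ unchanged, but since $k\in\Conj{A}$ it also leaves $\snorm{f-k-\overline{g'}}$ (as $g'$ ranges over $A$), and hence the distance to $\Conj{A}$, unchanged. So no choice of representative converts your estimate on $\norm{D(f)-D(g)}$ into one on $\norm{D(f)}$, and no harmonic-conjugate analysis is needed either. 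The paper's device is to apply (ii) \emph{twice}: once to $h=f-g$, using $\Re(f-g)=\Re(f-\overline{g})$, which gives $\norm{D(f)-D(g)}\le 2e\norm{D}\snorm{f-\overline{g}}$ (this is the estimate you have); and once to $h=-i(f+g)$, using $\Re\bigl(-i(f+g)\bigr)=\Im(f+g)=\Im(f-\overline{g})$, which gives $\norm{D(f)+D(g)}\le 2e\norm{D}\snorm{\Im(f-\overline{g})}\le 2e\norm{D}\snorm{f-\overline{g}}$. Writing $D(f)=\tfrac12\bigl[(D(f)-D(g))+(D(f)+D(g))\bigr]$ and applying the triangle inequality then yields $\norm{D(f)}\le 2e\norm{D}\snorm{f-\overline{g}}$ with no loss in the constant, which resolves exactly the worry you raise at the end of your proposal.
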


As \cite{SEM_PhD} is not easily available, and 
since the proof of Proposition~\ref{p:fromunifalg} is short and instructive, we give the argument below, paraphrased slightly from the original wording.

\begin{proof}
Let $a\in A$\/. Since
$M$ is symmetric,
an easy induction
using the Leibniz identity
 shows that $D(a^n)=na^{n-1}\cdot D(a)$ for all $n\geq 1$;
on taking $a=\id[A]$ and $n=2,3$ we have
\[ D(\id[A]) = 2\cdot\id[A]\cdot D(\id[A]) = 3\cdot\id[A]^2\cdot D(\id[A]) = 3\cdot\id[A]\cdot D(\id[A]), \]
which forces $D(\id[A])=0$. 
It follows, by considering the power series expansion for $\exp$\/, that $D(e^a)= e^a\cdot D(a)$\/, and therefore
\begin{equation}\label{eq:exp-trick}
e^{-a}D(e^a)= D(a) \qquad\text{ for all $a\in A$\/,}
\end{equation}
(Note that this does not use the fact that $A$ is a uniform algebra.)

Now let $f\in A\cap \Conj{A}$\/. For every $t>0$ we have $\snorm{e^{itf}}\leq 1$ and $\snorm{e^{-itf}}\leq 1$\/; so by \eqref{eq:exp-trick} we have $\norm{D(tf)}\leq \norm{D}$\/. Since $t$ is arbitrary, $D(f)=0$\/, and (i)~is proved.

Now suppose $h\in A$\/. If $\Re h = 0$ then
$ih\in A\cap\Conj{A}$, and hence by part~(i) $iD(h)=D(ih)=0$.
We may thus assume \wLOG\ that $\snorm{\Re h} > 0$\/. By \eqref{eq:exp-trick}, for every $a\in A$ we have
\[ \norm{D(a)} \leq \norm{D} \snorm{e^a}\norm{D} \snorm{e^{-a}}
= \norm{D} \snorm{e^{\Re a}} \snorm{e^{-\Re a}}
\leq \norm{D} \exp (2\snorm{\Re a})\,. \]
Putting $a= (2\snorm{\Re h})^{-1}h$ we get $(2\snorm{\Re h})^{-1} D(h) \leq e$, and (ii)~follows.

Finally, given $f,g\in A$, note that
\[ \Re(f-\overline{g})=\Re(f-g) \quad,\quad \Im(f-\overline{g}) = \Re( -i(f+g))\,.\]
Applying (ii) with $h=f-g$ and then with $h=-i(f+g)$ gives
\[ \begin{aligned}
\norm{D(f)-D(g)} & \leq 2e\norm{D} \snorm{\Re(f-\overline{g})y} \leq 2e\norm{D} \snorm{f-\overline{g}}\,, \\
\norm{D(f)+D(g)} &
\leq 2e\norm{D} \snorm{\Im(f-\overline{g})} \leq 2e\norm{D} \snorm{f-\overline{g}}\,.
\end{aligned} \]
Combining these last two inequalities and using the triangle inequality, we obtain~(iii).
\end{proof}
\begin{proof}[Proof of Proposition \ref{t:butsurj}]
Take $A=\AD$ and $M=\AD^*$ in Proposition \ref{p:fromunifalg}. Then for each $D\in\Der(\AD)$, the functional $\sB(D)\in\AD^*$ extends to a continuous linear functional on $C(\T)/\Conj \AD$\/, with norm at most $2e\norm{D}$\/. 
By the F.~\&~M. Riesz Theorem,  $(C(\T)/\Conj \AD)^*$ consists precisely of functionals of the form 
\[\psi:f+\Conj \AD\mapsto \int_\T fh_\psi \darc{z}, \]
where $h_\psi\in \Conj H^1_0$ (necessarily unique) and $\norm\psi=\norm{h_\psi}_{L^1}$.
Set $\sE(D):=h_{\sB(D)}$. We then have  $\sE(D) \in \Conj H^1_0$\/, and $\norm{\sE(D)}_{L^1}\leq 2e \norm{D}$\/. That $\sE$ is linear and injective follows from Lemma~\ref{l:sB}.
\end{proof}

\subsection{Surjectivity}
To go further, we shall use some standard (albeit non-trivial) results from the theory of BMOA. Recall that a function $f\in H^2$ lies in the space BMOA if and only if it satisfies one of the following three equivalent conditions:
\begin{newnum}
\item\label{item:BMO}
 there exists a constant $C_1$ such that
\[ \frac{1}{\abs{I}} \int_I \abs{f- f_I} \darc{z} \leq C_1 \quad\text{ for every arc $I\subset \T$\/,} \]
where $f_I$ denotes the average of $f$ over $I$\/.
\item\label{item:dual}
 there exists a constant $C_2$ such that $\abs{ \int_\T f \overline{h} \darc{z} } \leq C_2\norm{h}_1$ for all $h\in H^1_0$\/.
\item\label{item:carleson}
 there exists a constant $C_3$ such that
\[ \int_\Disc \abs{f'(z)k(z)}^2 (1-\abs{z})^2 \d{A}(z) \leq C_3^2 \int_\T \abs{k}^2 \darc{z} \qquad\text{ for all $k\in H^2$\/,} \]
where $\d{A}$ is the usual area measure on $\Disc$.
\end{newnum}
By taking the least possible constants $C_1$\/, $C_2$ and $C_3$ which satisfy the above, one obtains three mutually equivalent
seminorms
 $\norm{\cdot}_{(1)}$\/, $\norm{\cdot}_{(2)}$ and $\norm{\cdot}_{(3)}$ on the space BMOA.
(The kernel of each of these seminorms is the space of constant functions.)

\begin{rem}
Strictly speaking, \ref{item:BMO} is the true definition. The equivalence of \ref{item:BMO} and \ref{item:dual} is Fefferman's celebrated duality theorem, while the equivalence of \ref{item:BMO} and \ref{item:carleson} is essentially the Carleson embedding theorem in a special case.
Both results are well-known to specialists, and proofs can be found in \cite{Gar_BAF}, although assembling them from scattered parts takes some work on the reader's part.
(The duality theorem is covered in \cite[Ch.~VI, \S4]{Gar_BAF} -- see the discussion at the top of p.~240 
for how one recovers the \emph{complex} form of Fefferman's theorem from the \emph{real} form -- while a proof of the Carleson multiplier theorem can be extracted from a combination of \cite[Theorem I.5.6]{Gar_BAF} and \cite[Theorem VI.3.4]{Gar_BAF}.)
\end{rem}

We can now give the proof of Theorem \ref{t:characterise}. The remainder of the argument is essentially contained, phrased differently, in results from~\cite{AlPe96} (see in particular Theorem~3.4 of that article). However, the required facts about BMOA predate \cite{AlPe96}, and it is easier to use them directly, rather than try to deduce our theorem from that paper's results.


\begin{proof}[Proof of Theorem~\ref{t:characterise}]
Having proven Proposition \ref{t:butsurj}, it only remains to show that $\sE$ is surjective. Let $h\in H^1_0$\/. We claim that there is a constant $K>0$ such that, for any $f$ and $g$ which are holomorphic on a neighbourhood of $\Delta$\/, we have
\begin{equation}\label{eq:keybound}
\int_\T u(z)\overline{h(z)} \darc{z} \leq K \snorm{f}\snorm{g}
\end{equation}
where $u$ is defined by $u(0)=0$ and $u'=f'g$\/. If this is the case, then we can extend the map $(f,g)\mapsto \int_\T u(z)\overline{h(z)} \darc{z}$ to obtain a bounded bilinear form on $\AD$\/, which we denote by~$D$\/. It can then be easily verified that $D\in\Der(\AD)$\/, and that $\sE(D)=\overline{h}$ as required.

It remains only to prove the inequality \eqref{eq:keybound}. Thus, let $f$ and $g$ be holomorphic on a neighbourhood of $\Delta$\/. It is immediate from the definition of $\norm{\cdot}_{(3)}$ that $\norm{u}_{(3)}\leq\norm{f}_{(3)}\snorm{g}$\/, and since the norms $\norm{\cdot}_{(3)}$ and $\norm{\cdot}_{(2)}$ are equivalent, there exists a constant $K_1$ -- independent of $f$\/, $g$ and $h$ -- such that
\[ \norm{u}_{(2)} \leq K_1 \norm{f}_{(2)}\snorm{g} \,.\]
The desired inequality~\eqref{eq:keybound} now follows, since $\norm{f}_{(2)} \leq \snorm{f}$\/.
\end{proof}

\begin{rem}
It is possible to prove \eqref{eq:keybound} more directly, using arguments from the \emph{proof} that the seminorms $\norm{\cdot}_{(2)}$ and $\norm{\cdot}_{(3)}$ are equivalent; see Remark~\ref{rem:could-have} below.
\end{rem}

\end{section}

\begin{section}{Compactness}
It is shown in~\cite{SEM_PhD} that every bounded derivation from the disc algebra to its dual is weakly compact. (In fact, by results of Bourgain, every bounded linear map from $\AD$ to its dual is $2$-summing, and hence weakly compact, but the 
proof for general linear maps is significantly harder than the special case of derivations. For relevant details, see~\cite[\S III.I]{Woj} for instance.)

We show that rather more is true: each derivation can be 
approximated in the operator norm by finite rank derivations and so, in particular, is compact. This answers a question raised in~\cite{SEM_PhD}.
The key observation is the following.

\begin{lem}\label{l:polynomial-symbol}
Let $h$ be an analytic polynomial with zero constant term. Then $\sE^{-1}(\overline h)$ has finite rank.
\end{lem}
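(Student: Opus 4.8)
The plan is to unwind the explicit description of $\sE^{-1}$ furnished by the proof of surjectivity in Theorem~\ref{t:characterise} and read off directly that the resulting derivation depends on only finitely many Taylor coefficients. Write $h(z)=\sum_{k=1}^N a_k z^k$, so that $h\in H^1_0$, and set $D\defeq\sE^{-1}(\overline h)$. By construction, for $f,g$ holomorphic on a neighbourhood of $\Delta$ one has $D(f)(g)=\int_\T u\,\overline{h}\,\darc{z}$, where $u$ is the holomorphic antiderivative of $f'g$ normalised by $u(0)=0$.

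First I would pass to Taylor coefficients. Since $u$ is holomorphic with $u(0)=0$, we may write $u(z)=\sum_{j\geq 1}c_j z^j$, while on $\T$ we have $\overline{h(z)}=\sum_{k=1}^N\overline{a_k}\,z^{-k}$. Orthogonality of the characters then gives $D(f)(g)=2\pi\sum_{k=1}^N c_k\,\overline{a_k}$, so that the value of $D(f)(g)$ depends on $u$ only through $c_1,\dots,c_N$. The identity $u'=f'g$ expresses $kc_k$ as the coefficient of $z^{k-1}$ in $f'g$, which is in turn a finite bilinear expression in the coefficients $f_1,\dots,f_N$ of $f$ and $g_0,\dots,g_{N-1}$ of $g$; the constant $2\pi$ is irrelevant to what follows.

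The key step is then to regroup this expression by the coefficients of $g$. Writing $e_j^*$ for the functional on $\AD$ sending $g$ to its $j$-th Taylor coefficient — a bounded linear functional, since $\abs{g_j}\leq\snorm{g}$ — one obtains $D(f)=\sum_{j=0}^{N-1}\beta_j(f)\,e_j^*$, where each $\beta_j$ is a bounded linear functional of $f$ (a finite combination of coefficients of $f'$, hence controlled by $\snorm{f}$). Consequently the range of $D$ lies in the span of $e_0^*,\dots,e_{N-1}^*$, an at-most-$N$-dimensional subspace of $\AD^*$, and so $D$ has finite rank.

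The only point requiring genuine care — and the nearest thing to an obstacle — is justifying that this finite-rank formula, derived initially only for functions holomorphic on a neighbourhood of $\Delta$, persists for all $f,g\in\AD$. This follows because both sides define bounded bilinear forms on $\AD$ (the left-hand side by Theorem~\ref{t:characterise}, the right-hand side because each $\beta_j$ and each $e_j^*$ is bounded) which agree on the dense subalgebra of analytic polynomials; the remaining bookkeeping is routine.
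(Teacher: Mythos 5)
Your argument is correct, and it rests on the same elementary mechanism as the paper's proof --- orthogonality on $\T$ ensures that $D(f)(g)=\int_\T u\overline{h}\darc{z}$ sees only the first $N$ Taylor coefficients of $u'=f'g$ when $\deg h=N$ --- but the two proofs package this mechanism in dual ways. The paper first reduces by linearity to the monomial case $h(z)=z^n$ and then exhibits a finite-codimension subspace annihilated by $D$: its formula shows $D(f)=0$ for all $f\in z^{n+2}\AD$, so $D$ factors through the finite-dimensional quotient $\AD/z^{n+2}\AD$. You keep a general polynomial $h$ and instead exhibit a finite-dimensional subspace containing the range: $D(f)=\sum_{j=0}^{N-1}\beta_j(f)\,e_j^*$, so the range lies in the span of the coefficient functionals $e_0^*,\dots,e_{N-1}^*$. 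The paper's kernel-side argument is shorter (essentially one line after the reduction to monomials) but implicitly uses the abstract fact that an operator vanishing on a closed finite-codimension subspace has finite rank; your range-side argument needs more coefficient bookkeeping but yields an explicit rank bound $\mathrm{rank}\,D\leq N$ and an explicit spanning set for the range, which fits well with the paper's later remark about constructing explicit finite-rank approximants. One point where your write-up is more careful than the paper's: the integral formula for $D$ is available a priori only on polynomials (or functions holomorphic near $\Delta$), and the passage to all of $\AD$ by boundedness and density, which the paper leaves tacit, is spelled out at the end of your proof.
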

\begin{proof}
It suffices by linearity to consider the case where $h(z)=z^n$ for some $n\geq 1$\/. Let $D\defeq \sE^{-1}(\overline{h})$\/; then
\[ D(f)(g) = \int_\T f'(z) g(z) z^{-n} \darc{z} \,, \]
showing that $D(f)=0$ for every $f$ in the finite-codimension subspace $z^{n+2}\AD$\/. 
\end{proof}

Since the polynomials are dense in $H^1$\/, Theorem \ref{t:characterise} and Lemma~\ref{l:polynomial-symbol} give the following corollary.

\begin{cor}
Each $D\in\Der(\AD)$ is approximable and hence compact.
\end{cor}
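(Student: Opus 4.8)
The plan is to read off the result directly from the machinery already assembled, namely the isomorphism $\sE$ of Theorem~\ref{t:characterise}, the density of polynomials in $H^1$, and the finite-rank computation of Lemma~\ref{l:polynomial-symbol}. Recall that a bounded operator is \emph{approximable} when it is an operator-norm limit of finite-rank operators; since finite-rank operators are compact and the compact operators form an operator-norm-closed subspace of the bounded operators, any approximable operator is automatically compact. Thus it suffices, for a given $D\in\Der(\AD)$, to produce a sequence of finite-rank derivations converging to $D$ in operator norm.

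To do this I would set $h_D\defeq\sE(D)\in\Conj H^1_0$ and write $h_D=\overline{g}$ with $g\in H^1_0$. The analytic polynomials with zero constant term are dense in $H^1_0$: polynomials are dense in $H^1$, and since evaluation at the origin is a bounded functional on $H^1$, subtracting off the (vanishing in the limit) constant terms keeps us inside $H^1_0$. Hence I can choose such polynomials $p_n$ with $\norm{p_n-g}_1\to 0$, and taking conjugates gives $\overline{p_n}\to h_D$ in $\Conj H^1_0$. Setting $D_n\defeq\sE^{-1}(\overline{p_n})$, each $D_n$ has finite rank by Lemma~\ref{l:polynomial-symbol}. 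Because $\sE$ is a bounded linear bijection between the Banach spaces $\Der(\AD)$ and $\Conj H^1_0$, the open mapping theorem guarantees that $\sE^{-1}$ is bounded, so $D_n=\sE^{-1}(\overline{p_n})\to\sE^{-1}(h_D)=D$ in the operator norm. This exhibits $D$ as approximable, and hence compact.

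I expect no serious obstacle once Theorem~\ref{t:characterise} and Lemma~\ref{l:polynomial-symbol} are available; the argument is a routine density-plus-continuity passage. The only two points deserving a moment's care are the boundedness of $\sE^{-1}$, which relies on $\Der(\AD)$ being complete so that the open mapping theorem applies, and the standard fact that compactness persists under operator-norm limits. Both are elementary, so the weight of the corollary rests entirely on the two results quoted, and the proof itself is short.
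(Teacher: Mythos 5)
Your proof is correct and follows essentially the same route the paper intends: the paper derives the corollary in one line from the density of polynomials in $H^1$, Theorem~\ref{t:characterise}, and Lemma~\ref{l:polynomial-symbol}, and your argument simply makes explicit the details (boundedness of $\sE^{-1}$ via the open mapping theorem, completeness of $\Der(\AD)$, and norm-closedness of the compact operators) that the paper leaves to the reader.
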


\begin{rem}
Given $D\in\Der(\AD)$, we can construct an \emph{explicit} approximating sequence of finite-rank derivations -- just use one's favourite method of approximating $H^1$ functions by analytic polynomials, such as convolution with the F\'ejer kernel for example. This may be useful for studying other operator-theoretic properties of~$D$\/; for instance, its singular values can be estimated by using known approximation results for $H^1$-functions.
\end{rem}

\end{section}



\begin{section}{$2$-summing norms}
As remarked in the previous section, we know that every $D\in\Der(\AD)$ is $2$-summing. Hence, by Pietsch's factorization theorem, there exists a finite, positive Borel measure $\mu_D$ on $\Delta$ such that
\begin{equation}\label{eq:Pietsch}
\tag{$\star$} \norm{D(f)} \leq \norm{f}_{L^2(\mu)} = \left(\int_{\Delta} \abs{f(z)}^2 \d{\mu}_D(z)\right)^{1/2} \qquad\text{ for all $f\in \AD$\/.}
\end{equation}
However, these abstract arguments give us no way to find an `explicit' measure $\mu_D$\/. In this section we shall show how this can be done, using ideas extracted from existing proofs that the seminorms $\norm{\cdot}_{(2)}$ and $\norm{\cdot}_{(3)}$ are equivalent. See \cite[Ch. 3 and 4]{Gar_BAF}.

As in the proof of Theorem~\ref{t:characterise}, for each $h\in H^1_0$ we define a bilinear functional $D_h$ on the space of polynomials as follows: given polynomials $f$ and $g$\/, let $u$ be the unique polynomial satisfying $u(0)=0$ and $f'g=u'$\/; then set
\[
D_h(f)(g)\defeq \int_\T u(z) \overline{h(z)} \darc{z}\,.
\]

By Proposition~\ref{t:butsurj}, every $D\in\Der(\AD)$ has the form $D_h$ for some $h\in H^1_0$\/.
We may write $h(z) = \alpha z + z^2 F(z)$ where $F\in H^1$ and $\alpha\in\Cplx$.
Then, since $F+\norm{F}_1$ and $F-\norm{F}_1$ are $H^1$-functions vanishing nowhere on $\Disc$, they have square roots in $H^2$, and we can therefore write
\[ z^2F(z) =
 \left[z \left( \frac{F+\norm{F}_1}{2}\right)^{1/2}\right]^2 +
 \left[z \left( \frac{F-\norm{F}_1}{2}\right)^{1/2}\right]^2 = k_1^2 +k_2^2\,,\text{ say,} \]
where $k_1,k_2\in H^2_0$.
Thus
\begin{equation}\label{eq:divide-and-conquer}
D_h=\alpha D_Z+D_{k_1^2} +D_{k_2^2}\,,
\end{equation}
and we shall estimate these three terms separately.

\subsection*{Estimating $\norm{D_Z}$.}
Since
\[ 
D_Z(f)(g) = \int_0^{2\pi} u(e^{i\theta}) e^{-i\theta}\d{\theta}
 = 2\pi f'(0)g(0) = 2\pi\widehat{f}(1)g(0), \]
we find that
\begin{equation}\label{eq:facile}
\tag{$\diamondsuit$}
\norm{D_Z(f)} = 2\pi \abs{\widehat{f}(1)} \leq \left(2\pi \int_\T \abs{f(z)}^2\darc{z}\right)^{1/2} = \sqrt{2\pi}\norm{f}_{L^2(\lambda)}\,,
\end{equation} 
where $\lambda$ denotes arc length measure on $\T$, regarded as a Borel measure on $\Delta$\/.

\subsection*{Estimating $\norm{D_{k^2}(f)}$}
Fix $k\in H^2_0$\/. We shall construct finite, positive, Borel measures $\mu_1$ and $\mu_2$ on $\Delta$\/, such that
\begin{equation}\label{eq:le-but}
\tag{$\spadesuit$}
\norm{D_{k^2}(f)} \leq \norm{f}_{L^2(\mu_1)} + \norm{f}_{L^2(\mu_2)}
 \qquad\text{ for all polynomials $f$\/.}
\end{equation}

The key is a well-known identity of Littlewood-Paley type. To state it, and for later convenience, we introduce some notation. Let $\Lambda$ denote the measure on $\Disc\setminus\{0\}$ defined by $\d{\Lambda}(z) = 4\logweight{z}$\/.
We denote the corresponding inner product on $C(\Disc)$ by $\pair{\cdot}{\cdot}_{\Lambda}$ and the induced Hilbert-space norm by $\norm{\cdot}_{\Lambda}$\/.

\begin{lem}\label{l:LPtrick}
Let $u,v\in H^2$\/.
Then
\[
\pair{u'}{v'}_\Lambda
 = \int_{\T} (u(\zeta)-u(0))\overline{(v(\zeta)-v(0))}\darc{\zeta}
\]
In particular,
\begin{equation}\label{eq:norms-equal}
\norm{u'}_\Lambda
 = \int_{\T} \abs{(u(\zeta)-u(0)}^2\darc{\zeta} \leq \norm{u}_2\/.
\end{equation}
\end{lem}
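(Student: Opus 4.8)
The statement to prove is Lemma~\ref{l:LPtrick}, the Littlewood--Paley identity
\[
\pair{u'}{v'}_\Lambda
 = \int_{\T} (u(\zeta)-u(0))\overline{(v(\zeta)-v(0))}\darc{\zeta}
\]
for $u,v\in H^2$, with the inner product $\pair{\cdot}{\cdot}_\Lambda$ induced by the measure $\d{\Lambda}(z)=4\logweight{z}$ on $\Disc\setminus\{0\}$, together with the consequence \eqref{eq:norms-equal}.

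The plan is to reduce everything to monomials and compute. Both sides are sesquilinear in $(u,v)$, so by density of polynomials in $H^2$ it suffices to verify the identity when $u(z)=z^m$ and $v(z)=z^n$ with $m,n\geq 1$ (the constant terms $u(0),v(0)$ are subtracted off, so the $m=0$ or $n=0$ cases give zero on both sides and may be ignored). First I would compute the right-hand side: since $\int_\T \zeta^m\overline{\zeta^n}\darc{\zeta} = 2\pi\,\delta_{mn}$, the boundary integral equals $2\pi$ when $m=n$ and $0$ otherwise. Next I would compute the left-hand side. Writing $z=re^{i\theta}$ and using $u'=mz^{m-1}$, $v'=nz^{n-1}$, the integral over $\Disc$ separates as a product of an angular integral $\int_0^{2\pi} e^{i(m-n)\theta}\d{\theta}$, which again forces $m=n$, and a radial integral. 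For $m=n$ the left-hand side becomes
\[
4m^2 \int_0^1 r^{2m-2}\,\log\frac{1}{r}\; r\,\d{r}\cdot 2\pi,
\]
after accounting for the $\operatorname{dA}$ factor $r\,\d{r}\,\d{\theta}$ and the normalisation of the angular integral. The remaining one-variable integral $\int_0^1 r^{2m-1}\log(1/r)\,\d{r}$ is standard and equals $1/(4m^2)$, so the constant $4$ in the definition of $\Lambda$ is exactly what makes the left-hand side equal $2\pi$ as well. This matches the right-hand side monomial by monomial, proving the polarised identity; extending to all of $H^2$ is then a routine continuity argument once one checks both sides are continuous in the $H^2$ norm.

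For the displayed consequence \eqref{eq:norms-equal}, I would set $v=u$ in the identity, giving $\norm{u'}_\Lambda^2 = \int_\T \abs{u(\zeta)-u(0)}^2\darc{\zeta}$ (note the paper's display writes $\norm{u'}_\Lambda$ where it means the squared norm, or equivalently suppresses a square; I would state it as the square of the Hilbert-space norm to be safe). The inequality $\le\norm{u}_2^2$ then follows because subtracting the constant $u(0)=\widehat u(0)$ only removes the zeroth Fourier mode, so $\int_\T\abs{u-u(0)}^2\darc{\zeta} = \norm{u}_2^2 - 2\pi\abs{\widehat u(0)}^2 \le \norm{u}_2^2$ by Parseval.

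There is no serious obstacle here: the lemma is a classical computation, and the only point requiring care is the bookkeeping of the three normalisation constants (the factor $4$ in $\Lambda$, the factor $r$ from area measure in polar coordinates, and the value $1/(4m^2)$ of the radial integral) so that they conspire to reproduce the arc-length normalisation on the right. The mildest technical nuisance is justifying the interchange of summation and integration when passing from monomials to general $u,v\in H^2$; I would handle this by first proving the identity for polynomials and then invoking boundedness of the map $u\mapsto u'$ from $H^2$ into $L^2(\Lambda)$ — which is itself a restatement of \eqref{eq:norms-equal} on polynomials — to extend by continuity.
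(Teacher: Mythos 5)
Your proof is correct, and the computation of constants checks out: for $u=v=z^m$ the left side is $4m^2\cdot 2\pi\int_0^1 r^{2m-1}\log(1/r)\,\d{r} = 8\pi m^2\cdot\tfrac{1}{4m^2}=2\pi$, matching the boundary integral, and your repair of the paper's typo (the norms in \eqref{eq:norms-equal} should be squared) is the right reading. Where you differ from the paper: the paper does not prove the lemma at all, but cites Garnett \cite[Lemma 3.1]{Gar_BAF}, whose argument runs through Green's formula applied to $\abs{u}^2$ (harmonic-function machinery, subharmonicity, and a limiting argument over discs of radius $r\to 1$); the paper mentions the Fourier--Parseval route only in passing as an alternative. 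You have carried out that alternative in full: monomial orthogonality on both sides, explicit evaluation of the radial integral, and extension by density. Your approach is more elementary and self-contained -- it needs nothing beyond Parseval and polar coordinates -- at the cost of the density/continuity step at the end, where one must identify the continuous extension of $u\mapsto u'$ (bounded on polynomials by the identity itself) with actual differentiation; this follows because $H^2$-convergence implies locally uniform convergence of derivatives on $\Disc$, so the $L^2(\Lambda)$-limit of $u_n'$ is $u'$. The Green's formula proof avoids any density argument and works directly with general $H^2$ functions, which is why textbook treatments prefer it, but for the purposes of this paper either proof suffices and yours is arguably more transparent about why the normalising factor $4$ in $\Lambda$ is the correct one.
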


The lemma can be proved either by using Fourier series and Parseval's identity, or using Green's formula; see \cite[Lemma 3.1]{Gar_BAF} for a proof using the latter approach.

\begin{proof}[Proof of the inequality \eqref{eq:le-but}]
Applying Lemma~\ref{l:LPtrick} yields
\begin{subequations}
\begin{equation}\label{eq:first}
D_{k^2}(f)(g)  = \int_{\T} u(\zeta)\overline{h(\zeta)} \darc{\zeta} =\pair{u'}{(k^2)'}_\Lambda = \pair{f'g}{2k'k}_\Lambda \,.
\end{equation}
Then
\begin{equation}\label{eq:second}
 \begin{aligned}
  \Abs{\pair{f'g}{2kk'}_\Lambda } 
 & \leq 2\int_\D \abs{(f'gk'k)(z)} \d{\Lambda}(z)  \\
 & \leq 2 \snorm{g} \pair{\abs{f'k}}{\abs{k'}}_\Lambda  \\
 & \leq 2 \snorm{g} \norm{k'}_\Lambda \norm{f'k}_\Lambda & \quad\text{(Cauchy-Schwarz)} \\
 & = 2\snorm{g}\norm{k}_2 \norm{f'k}_\Lambda &\quad\text{(Equation~\eqref{eq:norms-equal})}
\end{aligned}
\end{equation}

We now consider $\norm{f'k}_\Lambda$\/. 
Since $f'k = (fk)'-fk'$\/, we have
\begin{equation}\label{eq:third}
\norm{f'k}_\Lambda \leq \norm{(fk)'}_\Lambda + \norm{fk'}_\Lambda
  = \norm{fk}_2 + \norm{fk'}_\Lambda 
  = \norm{f}_{L^2(\mu_B)} + \norm{f}_{L^2(\mu_I)}
\end{equation}
\end{subequations}
where $\mu_B$ and $\mu_I$ are the positive Borel measures on $\Disc$ given by
\[ \mu_B(X) \defeq \int_{X\cap\T} \abs{k(z)}^2 \darc{z} \quad;\quad \mu_I(X) \defeq \int_{X\cap\Disc} \abs{k'(z)}^2 \d{\Lambda}(z)\,. \]
Clearly $\mu_B$ is finite, since $k\in H^2$\/, and $\mu_I$ is finite since
\[  \mu_I(\Delta) = \int_\D \abs{k'(z)}^2 \d{\Lambda}(z) = \norm{k'}_\Lambda^2 = \norm{k}_2^2 <\infty\,. \]

Putting the inequalities \eqref{eq:first}, \eqref{eq:second} and \eqref{eq:third} together, we have
\begin{equation}\label{eq:direct}
\abs{D_{k^2}(f)(g)} \leq 4\norm{g}_\infty \norm{k}_2 \left( \norm{f}_{L^2(\mu_B)} + \norm{f}_{L^2(\mu_I)} \right)\,,
\end{equation}
which on rescaling gives us the desired inequality.
\end{proof}

Finally, using \eqref{eq:divide-and-conquer}, and combining the estimates \eqref{eq:facile} and \eqref{eq:le-but}, we obtain finite, positive Borel measures $\mu_1,\dots,\mu_5$ such that
\[ \norm{D_h(f)} \leq \sum_{j=1}^5 \norm{f}_{L^2(\mu_j)} \qquad\text{ for all polynomials $f$\/.} \]
To get a single $L^2$-upper bound, observe that there is a crude estimate
\begin{equation}
\label{eq:combining}
\sum_{j=1}^5 \norm{f}_{L^2(\mu_j)} \leq \sqrt{5} \norm{f}_{L^2(\mu_1+\dots+\mu_5)}\,,
\end{equation}
which is a straightforward consequence of the elementary inequality
\[ \left(\sum_{j=1}^m a_j\right)^2 = \sum_{j=1}^m\sum_{k=1}^m a_ja_k \leq \frac{1}{2} \sum_{j=1}^m\sum_{k=1}^m (a_j^2+a_k^2) \leq m\sum_{j=1}^m a_j^2 \qquad\text{for all $a_1,\ldots,a_m\geq 0$\/.} \]
Thus in \eqref{eq:Pietsch}, we may take $\mu_D=\sqrt{5}(\mu_1+\dots+\mu_5)$\/. Although we have not written out an explicit formula for $\mu_D$\/, it is clear from the calculations above that it could be easily done for a given $h\in H^1_0$\/.

\begin{rem}\label{rem:could-have}
The inequality \eqref{eq:direct} implies, in particular, that $D_{k^2}\in\Der(\AD)$ with $\norm{D_{k^2}} \leq 8\norm{k}_2^2$\/. One can use this to obtain a direct proof of the earlier inequality~\eqref{eq:keybound}, which does not rely on first knowing the equivalence of the seminorms $\norm{\cdot}_{(2)}$ and $\norm{\cdot}_{(3)}$. Thus Theorem~\ref{t:characterise} could have been proved without explicit mention of BMOA, although the argument would then have been longer and more opaque.
\end{rem}

\end{section}
\providecommand{\bysame}{\leavevmode\hbox to3em{\hrulefill}\thinspace}
\providecommand{\MR}{\relax\ifhmode\unskip\space\fi MR }
\renewcommand{\MRhref}[2]{%
  \href{http://www.ams.org/mathscinet-getitem?mr=#1}{#2}
}

\newcommand{\newMR}[2]{\hfill\MRhref{#1}{[MR #2]}}

\end{document}